\documentclass[12pt,amscd]{amsart}
\usepackage[all]{xy}
\usepackage{graphicx,tikz-cd}
\usepackage{amsmath,amsxtra,amssymb,latexsym, amscd,amsthm}
\usepackage{indentfirst}
\usepackage[mathscr]{eucal}
\usepackage[pagebackref=true]{hyperref}

\newtheorem{thm}{Theorem}[section]

\newtheorem{lem}[thm]{Lemma}

\theoremstyle{definition}
\newtheorem{defn}[thm]{Definition}
\newtheorem{exm}[thm]{Example}

\newtheorem{rem}[thm]{Remark}

\numberwithin{equation}{section}

\DeclareMathOperator{\Ind}{Ind}

\begin{document}

\title{Contractible independence complexes of trees}

\author[My Hanh Pham]{My Hanh Pham}
\address{Faculty of Education, An Giang University\\Vietnam National University Ho Chi Minh City\\ Long Xuyen, An Giang, Vietnam}
\email{pmhanh@agu.edu.vn}

\author{Thanh Vu}
\address{Institute of Mathematics, VAST, 18 Hoang Quoc Viet, Hanoi, Vietnam}
\email{vuqthanh@gmail.com}

\subjclass[2020]{05C31, 05C69}
\keywords{independence complex; independence polynomial; tree}

\date{}

\commby{}

\begin{abstract}
   We show that the independence complex of a tree is contractible if and only if it can be reduced to a path \( P_n \) with \( n \equiv 1 \pmod{3} \) by a sequence of truncation moves at branching points. As a consequence of our method, we also characterize the trees for which the independence polynomial evaluated at \( -1 \) is equal to \( 1 \) or \( -1 \).
\end{abstract}

\maketitle

\section{Introduction}
\label{sect_intro}

Let \( G \) be a finite simple graph. An \textit{independent set} in \( G \) is a set of pairwise nonadjacent vertices. The maximum cardinality of an independent set in \( G \) is called the \textit{independence number} of \( G \), denoted by \( \alpha(G) \). The \textit{independence polynomial} of \( G \) is defined by
\[
I(G;x) = s_0 + s_1 x + \dots + s_{\alpha(G)} x^{\alpha(G)},
\]
where \( s_i \) is the number of independent sets of size \( i \) in \( G \).

The independence polynomial of a graph was first studied by Gutman and Harary \cite{GH}, and has since been investigated by many others; see \cite{AMSE, G, HLMP, HL} and the references therein. For a survey, we refer to \cite{LM1}. There are only a few classes of graphs for which the independence polynomial is computed explicitly \cite{HKV, LM3}. For the independence polynomial of trees, one central open problem is unimodality. This property is known to hold for the class of claw-free graphs \cite{CS}.

The \textit{independence complex} of $G$ is the simplicial complex whose faces are the independent sets of $G$. Consequently, the independence polynomial of a graph is the $f$-polynomial of its independence complex. By results of Kozlov \cite{Ko} and Ehrenborg and Hetyei \cite{EH}, the independence complex of a tree is either contractible or homotopy equivalent to a sphere. In particular, \( I(G; -1) \in \{-1, 0, 1\} \) for any tree \( G \), and the independence complex of \( G \) is contractible if and only if \( I(G; -1) = 0 \). Kim \cite{Ki} recently proved that the independence complex of a graph with no induced cycle of length divisible by $3$ is either contractible or homotopy equivalent to a sphere. Ehrenborg and Hetyei also determined the homotopy type of the independence complex of path graphs. In particular, \( \Ind(P_n) \), the independence complex of a path on \( n \) vertices, is contractible if and only if \( n \equiv 1 \pmod{3} \). Engstr\"om \cite{E} characterized trees for which the independence complex is contractible. This was later generalized by Adamaszek \cite{A} and Faridi and Holleben \cite{FH}. In this paper, we completely characterize trees for which \( I(G;-1) = 0, 1, \) or \( -1 \). In particular, we show that \( \Ind(G) \) is contractible if and only if it can be reduced to a path \( P_n \) with \( n \equiv 1 \pmod{3} \) by a sequence of truncation moves at branching points.

We now introduce some terminology to make the previous statement more precise. Let $u$ be a vertex of $G$. The \textit{neighborhood} of $u$ in $G$, denoted by $N_G(u)$, is the set of all vertices $v$ such that $\{u,v\}$ is an edge in $G$. The \textit{degree} of $u$ is the cardinality of $N_G(u)$. A vertex $u$ of $G$ is called a \textit{branching point} if $\deg_G(u) > 2$, and it is called a \textit{leaf} if $\deg_G(u) = 1$. 

Let $G$ be a simple graph and $u$ be a branching point of $G$. A path starting at $u$ and ending at a leaf is called a \textit{pure branch} if every vertex on the path, except for $u$, has degree at most $2$ in $G$. We define the type of a branch $P$ to be the residue of its length modulo $3$. We define the following moves at branching points:

\begin{enumerate}
    \item If there exist a branch of type $1$ and a branch of type $2$, remove all vertices of $G$ except those belonging to these two branches.
    \item If there is more than one pure branch of the same type, remove all but one such branch.
    \item If $P$ is a pure branch of $G$ at $u$ of type $0$, remove all vertices on this branch except $u$.    
\end{enumerate}

\begin{defn}
Let $G$ be a simple graph. Let $H$ be a graph obtained by one of the moves above. Then $H$ is said to be a reduction of $G$, and we say that $G$ is reducible to $H$.
\end{defn}

The characterization of trees with a contractible independence complex is as follows.

\begin{thm}\label{thm_contractible} 
Let $G$ be a tree. Then $\Ind(G)$ is contractible if and only if $G$ is reducible to a path $P_n$ with $n \equiv 1 \pmod{3}$.
\end{thm}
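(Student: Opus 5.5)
The plan is to track contractibility through the quantity \(f(G) = I(G;-1)\). Since \(G\) is a tree, the introduction tells us that \(\Ind(G)\) is contractible if and only if \(f(G)=0\), and that \(f(G)\in\{-1,0,1\}\). I would show two things. First, each of the three moves sends a tree \(G\) to a tree \(H\) with \(f(H)=\pm f(G)\), or else with \(f(G)=f(H)=0\); in every case \(f(G)=0 \iff f(H)=0\). Second, every tree can be carried to a path by a finite sequence of moves. The theorem then follows from the Ehrenborg--Hetyei fact that \(\Ind(P_n)\) is contractible exactly when \(n\equiv 1 \pmod 3\).

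The tools are the vertex recursion \(I(G;x)=I(G-v;x)+xI(G-N[v];x)\), multiplicativity of \(I\) over disjoint unions, and its consequence: if \(N(a)\subseteq N(b)\) for distinct vertices \(a,b\), then \(f(G)=f(G-b)\). (This is the numerical shadow of the fold lemma. The recursion at \(b\) leaves the term \(-f(G-N[b])\), which vanishes because \(a\) is isolated in \(G-N[b]\), and a graph with an isolated vertex has \(f=0\).) Also \(f(P_2)=-1\).

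First, a pure branch \(u,a_1,\dots,a_m\) can be shortened by three vertices while changing \(f\) only by a sign. The leaf \(a_m\) satisfies \(N(a_m)=\{a_{m-1}\}\subseteq N(a_{m-2})\), so we may delete \(a_{m-2}\); when \(m=3\) the vertex deleted is \(a_1\). This splits off the edge \(a_{m-1}a_m\), which contributes a factor \(-1\). Iterating gives move (3) directly. It also reduces every type-\(1\) branch to a single leaf at \(u\) and every type-\(2\) branch to a pendant path \(u,a,b\), up to sign.

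For move (2), consider two leaves \(c,c'\) at \(u\). They have equal neighbourhoods, so one of them can be deleted. Now consider two pendant paths \(u,a,b\) and \(u,a',b'\). Since \(N(b)=\{a\}\subseteq N(u)\), we may delete \(u\) in both \(G\) and \(H\). Comparing the resulting products, \(f(G)=f(P_2)^2 f(R)\) and \(f(H)=f(P_2)f(R)\), where \(R\) is the common remainder of \(G-u\) and \(H-u\) outside the pendant paths. Hence \(f(G)=-f(H)\).

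For move (1), suppose \(u\) has a leaf \(c\) and a pendant path \(u,a,b\). Then \(N(c)=\{u\}\subseteq N(a)\), so we may delete \(a\), which leaves \(b\) isolated; hence \(f(G)=0\). The resulting graph \(H\) is a path on \(l_1+l_2+1\equiv 1 \pmod 3\) vertices, so \(f(H)=0\) as well.

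Second, for reachability, let \(G\) be a tree that is not a path. I would show that \(G\) has a branching point \(u\) at which all but at most one incident branch is pure. To find it, root \(G\) at a leaf and take a branching point of maximal depth; all branches below it are then pure, and it has at least two of them. At such a \(u\) one of the moves applies:
\begin{itemize}
\item if some pure branch has type \(0\), apply move (3);
\item otherwise, if two pure branches share a type, apply move (2);
\item otherwise the pure branches have distinct types in \(\{1,2\}\), and since there are at least two, both types occur, so move (1) applies.
\end{itemize}
Each move strictly decreases the number of vertices and keeps the graph a tree, so the process terminates, necessarily at a path.

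Combining both parts: if \(G\) is reducible to \(P_n\) with \(n\equiv1 \pmod 3\), then \(f(G)=0\) by the invariance. Conversely, any terminal path \(P_n\) reached from \(G\) satisfies \(f(P_n)=0\iff f(G)=0\), which forces \(n\equiv 1 \pmod 3\) when \(\Ind(G)\) is contractible.

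The step I expect to need the most care is the bookkeeping in the local computations. This includes degenerate cases: \(u\) dropping to degree \(\le 2\) after a move, which changes what "pure branch" means; branches of length exactly \(3\); and, for move (2), checking that the remainder \(R\) really is the same graph in \(G\) and in \(H\). The reachability argument is routine once the existence of a suitable branching point is stated carefully.
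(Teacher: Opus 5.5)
Your proof is correct. Its global skeleton is the paper's: each move preserves whether $I(\cdot;-1)$ vanishes, every tree reduces to a path, and the terminal path is then read off.

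The local lemmas, however, are proved differently. The paper expands once at the branching point $u$, using $I(G;-1)=I(G-u;-1)-I(G-N_G[u];-1)$ together with the closed form of $I(P_n;-1)$. One of the two terms dies because $G-u$ or $G-N_G[u]$ has a $P_{3k+1}$ component, and the other factors over the removed branches, so each move costs one line.

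You instead use the domination identity $N(a)\subseteq N(b)\Rightarrow I(G;-1)=I(G-b;-1)$ to peel pendant copies of $P_2$ off each branch. This shortens every branch to length at most $2$, where you check the moves by hand. You need only $I(P_2;-1)=-1$, plus the path table at the very end. Your accumulated signs $(-1)^k$ reproduce the paper's $(-1)^s$, so the argument also yields Theorem~\ref{thm_pm1}.

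Your route gains something concrete. Each of your steps is the Euler-characteristic shadow of a homotopy statement:
\begin{itemize}
\item the fold lemma $\Ind(G)\simeq\Ind(G-b)$;
\item $\Ind(G'\sqcup P_2)=\Sigma\,\Ind(G')$;
\item an isolated vertex making $\Ind$ a cone.
\end{itemize}
So your argument lifts verbatim to show $\Ind(G)\simeq\Sigma^j\Ind(H)$ for moves (2) and (3), and that $\Ind(G)$ and $\Ind(H)$ are both contractible for move (1). That proves the ``if'' direction, and in fact the contractible-or-sphere dichotomy for trees, without citing Kozlov or Ehrenborg--Hetyei. Both you and the paper otherwise rely on that dichotomy.

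Your reachability argument, rooting at a leaf and taking a deepest branching point, is also cleaner than the induction in Lemma~\ref{lem_reducible_trees}. You also verify explicitly that some move applies there, which the paper leaves implicit.

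The degenerate cases you flagged are harmless: none of your computations uses that $u$ is still a branching point.
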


To characterize graphs with \( I(G;-1) = 1 \) or \( -1 \), we need to take parity into account; hence, we introduce an additional definition.

\begin{defn}
Let \( G \) be a simple graph and let \( u \) be a branching point of \( G \). Assume that \( u \) has \( t \ge 2 \) pure branches of the same type, and let \( M \) be the move obtained by removing \( t-1 \) of these branches. We say that the move \( M \) is \emph{odd} if there is an odd number of branches whose length has residue \( 4 \pmod{6} \) when the type is \( 1 \), and residue \( 2 \pmod{6} \) when the type is \( 2 \). Otherwise, \( M \) is called an \emph{even} move.

Let \( M \) be the move of removing \( t \) pure branches of type \( 0 \) at \( u \). Then \( M \) is called an \emph{odd} move if an odd number of these branches have length congruent to \( 3 \pmod{6} \); otherwise, it is called an \emph{even} move.
\end{defn}

\begin{thm}\label{thm_pm1}
Let \( G \) be a tree. Then the following statements hold:
\begin{enumerate}
    \item \( I(G; -1) = 1 \) if and only if \( G \) is reducible to \( P_n \) with \( n \equiv 5,0 \pmod{6} \) by an even number of odd moves, or to \( P_n \) with \( n \equiv 2,3 \pmod{6} \) by an odd number of odd moves;
    \item \( I(G; -1) = -1 \) if and only if \( G \) is reducible to \( P_n \) with \( n \equiv 5,0 \pmod{6} \) by an odd number of odd moves, or to \( P_n \) with \( n \equiv 2,3 \pmod{6} \) by an even number of odd moves.
\end{enumerate}
\end{thm}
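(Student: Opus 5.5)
The plan is to track $I(G;-1)$ exactly, including its sign, under each of the three moves. Everything rests on the vertex recursion
\[
I(G;x)=I(G-u;x)+x\,I(G-N_G[u];x)
\]
and on the values $a_n:=I(P_n;-1)$. From $a_n=a_{n-1}-a_{n-2}$ with $a_0=1$ and $a_1=0$, the sequence $a_n$ is periodic with period $6$. For $n\equiv 0,1,2,3,4,5\pmod 6$ it takes the values $1,0,-1,-1,0,1$ respectively. In particular $I(P_n;-1)=1$ for $n\equiv 5,0$, $I(P_n;-1)=-1$ for $n\equiv 2,3$, and $I(P_n;-1)=0$ for $n\equiv 1\pmod 6$ and $n\equiv 4\pmod 6$, i.e.\ exactly when $n\equiv 1\pmod 3$.

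\textbf{Key local computation.} Let $u$ be a branching point. Let $R$ be the part of $G$ that remains after deleting $u$ and the vertices of some chosen pure branches at $u$, other than $u$ itself. A pure branch of length $\ell$ becomes $P_\ell$ in $G-u$ and $P_{\ell-1}$ in $G-N[u]$. Expanding at $u$ therefore gives
\[
I(G;-1)=I(R;-1)\prod_{P}a_{\ell(P)} \;-\; I(R-N_G(u);-1)\prod_{P}a_{\ell(P)-1},
\]
where the products run over the chosen branches and $R-N_G(u)$ means $R$ with the remaining neighbours of $u$ deleted. The contribution of a single branch of length $\ell$ is the pair $(a_\ell,a_{\ell-1})$, which depends only on $\ell\bmod 6$:
\begin{itemize}
\item type $0$: the pair is $(1,1)$ if $\ell\equiv 0$ and $(-1,-1)$ if $\ell\equiv 3\pmod 6$;
\item type $1$: the pair is $(0,1)$ if $\ell\equiv 1$ and $(0,-1)$ if $\ell\equiv 4\pmod 6$;
\item type $2$: the pair is $(-1,0)$ if $\ell\equiv 2$ and $(1,0)$ if $\ell\equiv 5\pmod 6$.
\end{itemize}

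\textbf{Effect of each move.} Reading off the three cases gives the following.
\begin{itemize}
\item A type-$0$ branch multiplies both terms by the same sign. So move (3) changes $I(G;-1)$ by the factor $(-1)^{\#\{\ell\equiv 3 \pmod 6\}}$.
\item Several type-$1$ branches all kill the first term, and their second factors multiply to a sign. So keeping one branch changes the value by $(-1)$ raised to the number of removed branches with $\ell\equiv 4\pmod 6$.
\item Type-$2$ branches behave symmetrically, with the relevant residue being $\ell\equiv 2\pmod 6$.
\end{itemize}
Consequently each move (2) or (3) multiplies $I(G;-1)$ by $-1$ if it is odd and by $+1$ if it is even. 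I would double-check here that the parity convention in the definition of odd moves (counting all $t$ branches versus the $t-1$ removed ones) matches this computation; if necessary, choose which branch to keep so that the two agree. Move (1) occurs when branches of type $1$ and type $2$ coexist. Then both terms vanish, so $I(G;-1)=0$. The resulting path has length $\ell_1+\ell_2+1\equiv 1\pmod 3$, so its value is also $0$. Hence in every case
\[
I(G;-1)=(-1)^{\#\text{odd moves}}\, I(H;-1)
\]
for a reduction $H$ of $G$, which is again a tree.

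\textbf{Existence of a reduction to a path, and conclusion.} Next I would show that every tree that is not a path admits a move. Consider the tree obtained by contracting to branching points. A branching point $u$ that is a leaf of this tree has at most one non-pure direction, so it has at least two pure branches. Among these, either two share a type, or one has type $0$, or types $1$ and $2$ both occur, and a move applies. Each move strictly decreases the number of vertices, so iterating terminates at a path $P_n$. Combining this with the multiplicativity above gives $I(G;-1)=(-1)^{k}a_n$, where $k$ is the number of odd moves in the sequence. Since the left-hand side does not depend on the chosen sequence, both the ``if'' and the ``only if'' directions of Theorem~\ref{thm_pm1} follow by reading off $a_n$ modulo $6$. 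Theorem~\ref{thm_contractible} follows in the same way, using that $\Ind(G)$ is contractible if and only if $I(G;-1)=0$ (Kozlov, Ehrenborg--Hetyei). The main obstacle I expect is bookkeeping: making sure the combinatorial definition of the moves (in particular which branches are kept, and the meaning of $R-N_G(u)$ when other branches remain) matches the algebraic factorization exactly.
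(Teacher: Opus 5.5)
Your proposal is correct and follows essentially the paper's route. The expansion at the branching point, combined with the period-$6$ values of $I(P_n;-1)$, is what Lemmas~\ref{lem_mix_1_2}--\ref{lem_pure_type_3} do case by case, and your leaf-of-the-branching-tree argument is a minor variant of Lemma~\ref{lem_reducible_trees}. On your parity worry: your own computation shows the count must run over the \emph{removed} branches, which is how Lemmas~\ref{lem_pure_type_1} and~\ref{lem_pure_type_2} read the definition. So no choice of kept branch is needed, and such a choice could not rescue an ``all $t$ branches'' reading anyway, e.g.\ when every branch has length $\equiv 4 \pmod 6$.
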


\section{Independence polynomial of trees at $-1$}\label{sec_pre}
Let \( G = (V(G), E(G)) \) be a simple graph with vertex set \( V(G) \) and edge set \( E(G) \). Let \( A \subseteq V(G) \). We denote by \( G - A \) the induced subgraph of \( G \) on \( V(G) \setminus A \). When \( A = \{u\} \), we write \( G - u \) instead of \( G - \{u\} \). The closed neighborhood of \( u \) in \( G \) is \( N_G[u] = N_G(u) \cup \{u\} \).

We have the following standard lemma \cite{GH}.

\begin{lem}\label{techn}
The following assertions hold:
\begin{enumerate}
    \item If \( G_1 \) and \( G_2 \) are disjoint graphs, then \( I(G_1 \cup G_2; x) = I(G_1; x)\, I(G_2; x) \).
    \item For every \( v \in V(G) \), we have \( I(G; x) = I(G - v; x) + x\, I(G - N_G[v]; x) \).
\end{enumerate}
\end{lem}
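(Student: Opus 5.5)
The plan is to prove both identities by comparing coefficients: we exhibit size-preserving (or size-shifting) bijections between families of independent sets. This works because \( I(G;x)=\sum_{S} x^{|S|} \), where \( S \) ranges over all independent sets of \( G \), including the empty set. Working with this generating-function form is the whole argument, so no homotopy-theoretic input from the introduction is needed.

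For (1), let \( G_1 \) and \( G_2 \) be disjoint, so no edge of \( G_1\cup G_2 \) joins a vertex of \( G_1 \) to a vertex of \( G_2 \). A set \( S\subseteq V(G_1)\cup V(G_2) \) is independent in \( G_1\cup G_2 \) if and only if \( S_1=S\cap V(G_1) \) is independent in \( G_1 \) and \( S_2=S\cap V(G_2) \) is independent in \( G_2 \). Hence \( S\mapsto (S_1,S_2) \) is a bijection from independent sets of \( G_1\cup G_2 \) onto pairs of independent sets of \( G_1 \) and \( G_2 \), and \( |S|=|S_1|+|S_2| \). Therefore
\[
I(G_1\cup G_2;x)=\sum_{(S_1,S_2)} x^{|S_1|}x^{|S_2|}=I(G_1;x)\,I(G_2;x).
\]

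For (2), fix \( v\in V(G) \) and split the independent sets of \( G \) according to whether they contain \( v \).
\begin{enumerate}
    \item The sets not containing \( v \) are exactly the independent sets of \( G-v \), since \( G-v \) is an induced subgraph. They contribute \( I(G-v;x) \).
    \item If \( v\in S \), then \( S \) contains no neighbor of \( v \), so \( S\setminus\{v\} \) is an independent set of \( G-N_G[v] \). Conversely, for any independent set \( T \) of \( G-N_G[v] \), the set \( T\cup\{v\} \) is independent in \( G \), because \( v \) has no neighbors in \( T \). These maps are mutually inverse and raise the size by exactly one, so this class contributes \( x\,I(G-N_G[v];x) \).
\end{enumerate}
Adding the two contributions gives the recursion in (2).

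There is no real obstacle here. The only points needing care are bookkeeping ones: the empty set is included, so the constant terms are \( 1 \) on both sides, and we use that \( G-A \) denotes an induced subgraph, so independence in the subgraph coincides with independence in \( G \) for subsets of \( V(G)\setminus A \).
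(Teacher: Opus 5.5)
Your argument is correct and complete. The paper gives no proof of this lemma; it simply cites it as standard from Gutman and Harary. Your bijective proof, which splits independent sets by their traces on $V(G_1)$ and $V(G_2)$ for (1) and by whether they contain $v$ for (2), is exactly the standard argument behind that citation.
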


The independence polynomial of paths is well known \cite{LM2}. In particular, 

\begin{lem}\label{paths}
Let \( P_n \) be a path on \( n \) vertices. Then
\[
I(P_n; -1) =
\begin{cases}
    1 & \text{if } n \equiv 0, 5 \pmod{6},\\
    0 & \text{if } n \equiv 1, 4 \pmod{6},\\
    -1 & \text{if } n \equiv 2, 3 \pmod{6}.
\end{cases}
\]
\end{lem}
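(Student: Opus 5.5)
We prove Lemma~\ref{paths} by strong induction on \(n\), setting \(a_n = I(P_n;-1)\) and using the convention \(P_0 = \emptyset\), so that \(a_0 = 1\). (The empty set is the only independent set of the empty graph.) The plan is to derive a linear recurrence for \(a_n\) from Lemma~\ref{techn}, observe that it forces \(a_n\) to be periodic with period \(6\), and then compare the first six values with the table in the statement.

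Label the vertices of \(P_n\) as \(v_1,\dots,v_n\) in order, and apply Lemma~\ref{techn}(2) at the end vertex \(v = v_n\). Deleting \(v_n\) leaves \(P_{n-1}\). Deleting the closed neighborhood \(N[v_n] = \{v_{n-1}, v_n\}\) leaves \(P_{n-2}\). Evaluating at \(x=-1\) therefore gives
\[
a_n = a_{n-1} - a_{n-2} \qquad (n \ge 2).
\]
Iterating the recurrence once more yields
\[
a_{n} = (a_{n-2} - a_{n-3}) - a_{n-2} = -a_{n-3}.
\]
Applying this twice gives \(a_n = a_{n-6}\), valid for all \(n\ge 6\). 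So \(a_n\) depends only on \(n \bmod 6\).

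It remains to compute the six initial values.
\begin{itemize}
\item \(a_0 = 1\).
\item \(a_1 = 1 - 1 = 0\), since \(P_1\) has the empty set and one singleton.
\item \(a_2 = 1 - 2 = -1\), since \(P_2\) has the empty set and two singletons.
\item The recurrence then gives \(a_3 = -1 - 0 = -1\), \(a_4 = -1 - (-1) = 0\), and \(a_5 = 0 - (-1) = 1\).
\end{itemize}
These values are \(1, 0, -1, -1, 0, 1\) for residues \(0,1,2,3,4,5 \pmod 6\), which is exactly the table in the statement. By the \(6\)-periodicity, the table holds for every \(n\ge 1\).

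There is no real obstacle in this argument. The only points needing care are the convention \(I(\emptyset;x)=1\) and checking that the recurrence \(a_n = a_{n-1} - a_{n-2}\) is valid already at \(n=2\), where it reads \(a_2 = a_1 - a_0\). Both are immediate from the definitions.
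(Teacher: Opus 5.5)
Your proof is correct. The paper does not prove this lemma itself; it calls the result well known and cites \cite{LM2}. Your argument is therefore a genuinely different, self-contained route.

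You apply Lemma~\ref{techn}(2) at an end vertex to get $a_n=a_{n-1}-a_{n-2}$ with $a_0=1$ and $a_1=0$. From this you derive $a_n=-a_{n-3}$, hence $a_n=a_{n-6}$, and you check the six initial values against the table. All of these steps check out, including the recurrence at $n=2$.

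The paper's citation is shorter. Your version costs only a few lines and uses the same vertex-deletion identity that drives the later lemmas. It also pins down the degenerate value $I(P_0;-1)=1$, which the paper relies on tacitly when a removed branch leaves no vertices behind. An example is a type-$1$ branch of length $1$ in Lemma~\ref{lem_pure_type_1}, where $k_j=0$.
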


\begin{lem}\label{lem_mix_1_2}
Let \( G \) be a simple graph and let \( u \) be a branching point of \( G \). Assume that \( u \) has a pure branch of type \( 1 \) and a pure branch of type \( 2 \). Then \( I(G;-1)=0 \).
\end{lem}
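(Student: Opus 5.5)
The plan is to shrink the two pure branches at \( u \) to lengths \( 1 \) and \( 2 \), at the cost of only a sign, and then settle that base configuration by a direct computation with Lemma~\ref{techn}. Throughout, I use Lemma~\ref{techn}(2) at \( x=-1 \) in the form
\[
I(G;-1)=I(G-v;-1)-I(G-N_G[v];-1),
\]
together with Lemma~\ref{techn}(1).

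\textbf{Step 1 (shortening a pure branch by three).} Let \( u=v_0,v_1,\dots,v_k \) be a pure branch with leaf \( v_k \) and \( k\ge 4 \). Apply the recursion at the leaf \( v_k \):
\[
I(G;-1)=I(G-v_k;-1)-I(G-\{v_{k-1},v_k\};-1).
\]
In \( G-v_k \) the vertex \( v_{k-1} \) is a leaf with neighbour \( v_{k-2} \). Applying the recursion at \( v_{k-1} \) gives
\[
I(G-v_k;-1)=I(G-\{v_{k-1},v_k\};-1)-I(G-\{v_{k-2},v_{k-1},v_k\};-1).
\]
The first terms cancel, so
\[
I(G;-1)=-I(G-\{v_{k-2},v_{k-1},v_k\};-1).
\]
Since \( k\ge 4 \), we have \( k-2\ge 2 \), so the vertex \( u \) and its degree are untouched. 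The truncated path \( u,v_1,\dots,v_{k-3} \) is still a pure branch at \( u \), of the same type, and the other branch at \( u \) is unaffected.

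\textbf{Step 2 (reduction to the base case).} Iterate Step 1 on the type-1 branch and on the type-2 branch. This changes \( I(\cdot;-1) \) only by a sign, and it reaches a graph in which \( u \) has a pendant leaf \( w \) and a pendant path \( u,y,z \) with leaf \( z \). So it suffices to show \( I(G;-1)=0 \) in this situation. Write \( R=G-\{w,u,y,z\} \).

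\textbf{Step 3 (base computation).} First expand at the leaf \( w \):
\[
I(G;-1)=I(G-w;-1)-I(G-\{w,u\};-1).
\]
For the second term, \( G-\{w,u\} \) is the disjoint union of the edge \( yz \) and \( R \). Since \( I(P_2;-1)=-1 \), Lemma~\ref{techn}(1) gives \( I(G-\{w,u\};-1)=-I(R;-1) \).

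For the first term, expand \( G-w \) at the leaf \( z \) and then at the new leaf \( y \) of \( G-\{w,z\} \), exactly as in Step 1. This gives
\[
I(G-w;-1)=-I(G-\{w,u,y,z\};-1)=-I(R;-1).
\]
Hence \( I(G;-1)=-I(R;-1)+I(R;-1)=0 \).

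The only delicate point is Step 1. One must check that shortening never destroys the pure-branch structure at \( u \), which is why I stop at lengths \( 1 \) and \( 2 \) rather than continuing. One must also check that only signs are introduced, so that the vanishing of \( I(\cdot;-1) \) is preserved in both directions. The base computation itself is routine.
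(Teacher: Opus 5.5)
Your argument is correct, but it takes a longer route than the paper's. The paper applies Lemma~\ref{techn}(2) once, at the branching point \(u\) itself. Write the branches as \(u,v_1,\dots,v_{3k+1}\) and \(u,w_1,\dots,w_{3\ell+2}\). Then \(G-u\) has \(v_1,\dots,v_{3k+1}\cong P_{3k+1}\) as a connected component, and \(G-N_G[u]\) has \(w_2,\dots,w_{3\ell+2}\cong P_{3\ell+1}\) as a connected component. By Lemma~\ref{techn}(1) and Lemma~\ref{paths}, both terms of the recursion vanish at \(x=-1\), for all \(k,\ell\ge 0\) at once. You instead expand only at leaves. Your Step~1 is a local form of \(I(P_n;-1)=-I(P_{n-3};-1)\), which you use to shrink the branches to lengths \(1\) and \(2\) before expanding at \(w\), \(z\), \(y\). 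Your route is self-contained: you never need the closed form of Lemma~\ref{paths}, only \(I(P_2;-1)=-1\). The shortening identity is also reusable and would produce the sign factors \((-1)^s\) in the later branch-removal lemmas. The price is the bookkeeping you flag (keeping \(k\ge 4\) so that \(u\) keeps its neighbour on the branch) and a roundabout base case. Had you expanded your base configuration at \(u\) rather than at \(w\), \(G-u\) would contain the isolated vertex \(w\) and \(G-N_G[u]\) the isolated vertex \(z\), each giving a factor \(I(P_1;-1)=0\). That observation, applied without shortening first, is exactly the paper's proof, so your Steps~1 and~2 are not needed.
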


\begin{proof}
Let \( P = u, v_1, \ldots, v_{3k+1} \) and \( Q = u, w_1, \ldots, w_{3\ell+2} \) be pure branches at \( u \) of type \( 1 \) and type \( 2 \), respectively, where \( k, \ell \) are nonnegative integers. By assumption, \( G - u \) has a connected component consisting of \( v_1, \ldots, v_{3k+1} \), which is isomorphic to \( P_{3k+1} \). Moreover, \( G - N_G[u] \) has a connected component \( \{w_2, \ldots, w_{3\ell+2}\} \), which is isomorphic to \( P_{3\ell+1} \). By Lemma \ref{techn} and Lemma \ref{paths}, \( I(G-u;-1) = 0 \) and \( I(G-N_G[u];-1) = 0 \). Applying Lemma~\ref{techn}, we conclude that \( I(G;-1) = 0 \).
\end{proof}

\begin{lem}\label{lem_pure_type_1}
Let \( G \) be a simple graph and let \( u \) be a branching point of \( G \). Assume that \( u \) has \( t \ge 2 \) pure branches of type \( 1 \). Let \( H \) be the graph obtained by removing all but one such branch. Assume that among the removed branches, there are \( s \) branches whose length has residue \( 4 \pmod{6} \). Then
\[
I(G;-1) = (-1)^s I(H;-1).
\]
\end{lem}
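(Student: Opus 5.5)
The plan is to expand both \( I(G;-1) \) and \( I(H;-1) \) at the branching point \( u \) using Lemma~\ref{techn}(2), in the same way as in the proof of Lemma~\ref{lem_mix_1_2}. The point is that the vertex-deletion term vanishes in both graphs, while the closed-neighborhood term factors into a common part times a product of short paths.

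Let \( P = u, v_1, \ldots, v_{3k+1} \) be the pure branch of type \( 1 \) that is kept in \( H \). Let \( Q_1, \ldots, Q_{t-1} \) be the removed branches, with \( Q_i = u, w^i_1, \ldots, w^i_{m_i} \) and \( m_i \equiv 1 \pmod 3 \). Since \( P \) is pure, \( v_1, \ldots, v_{3k+1} \) induce a connected component of both \( G - u \) and \( H - u \), and this component is isomorphic to \( P_{3k+1} \). By Lemma~\ref{techn}(1) and Lemma~\ref{paths}, we get \( I(G-u;-1) = I(H-u;-1) = 0 \). Lemma~\ref{techn}(2) then gives
\[
I(G;-1) = -I(G-N_G[u];-1), \qquad I(H;-1) = -I(H-N_H[u];-1).
\]

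Next I compare the two closed-neighborhood deletions. The vertices of \( H \) are those of \( G \) minus \( \{w^i_j\} \), and \( N_H(u) = N_G(u) \setminus \{w^1_1, \ldots, w^{t-1}_1\} \). It follows that \( G - N_G[u] \) is the disjoint union of \( H - N_H[u] \) with the paths \( w^i_2, \ldots, w^i_{m_i} \). Each of these paths is a separate component because every \( Q_i \) is pure, and the \( i \)-th one is isomorphic to \( P_{m_i - 1} \). By Lemma~\ref{techn}(1),
\[
I(G-N_G[u];-1) = I(H-N_H[u];-1)\prod_{i=1}^{t-1} I(P_{m_i-1};-1).
\]

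Finally, \( m_i - 1 \equiv 0 \pmod 3 \), so Lemma~\ref{paths} gives \( I(P_{m_i-1};-1) = 1 \) when \( m_i - 1 \equiv 0 \pmod 6 \) and \( -1 \) when \( m_i - 1 \equiv 3 \pmod 6 \). The second case is exactly \( m_i \equiv 4 \pmod 6 \), so the product equals \( (-1)^s \), and combining the displays yields the claim.

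There is no real obstacle. The only care needed is the bookkeeping that a branch's length is its number of vertices other than \( u \). One must also check that removing the branches does not alter the component structure of \( H - N_H[u] \), which holds because the removed vertices are adjacent only to each other and to \( u \).
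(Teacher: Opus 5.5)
Your proof is correct and matches the paper's argument. Both expand at \( u \) via Lemma~\ref{techn}(2), kill the \( G-u \) and \( H-u \) terms using the retained \( P_{3k+1 } \) component, and factor \( G-N_G[u] \) as \( H-N_H[u] \) together with paths on \( m_i-1 \equiv 0 \pmod 3 \) vertices, whose values at \( -1 \) are \( \pm 1 \) according to \( m_i \) modulo \( 6 \) (when \( m_i=1 \) the leftover path is empty and contributes \( 1 \), consistent with your count).
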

\begin{proof}
For \( j = 1, \ldots, t \), let the \( j \)-th pure branch from \( u \) of type \( 1 \) be
\( u, v_{j,1}, \ldots, v_{j,3k_j+1} \). Let
\[
H = G \setminus \{ v_{j,\ell} \mid j = 2, \ldots, t,\ \ell = 1, \ldots, 3k_j+1 \}
\]
be the graph obtained by removing \( t-1 \) pure branches of type \( 1 \) from \( G \).

Both \( G - u \) and \( H - u \) contain a connected component isomorphic to the path \( P_{3k_1+1} \); hence \( I(G-u;-1) = I(H-u;-1) = 0 \).

Now, \( G - N_G[u] \) is the disjoint union of \( H - N_H[u] \) and \( t-1 \) paths of lengths \( 3k_j - 1 \) for \( j \ge 2 \). By Lemma~\ref{techn} and Lemma~\ref{paths}, we obtain
\[
I(G-N_G[u];-1) = (-1)^s I(H-N_H[u];-1),
\]
where \( s \) is the number of branches among the removed ones whose length is congruent to \( 4 \pmod{6} \). Applying Lemma~\ref{techn}, we deduce that
\[
I(G;-1)
= - I(G-N_G[u];-1)
= - (-1)^s I(H-N_H[u];-1) 
= (-1)^s I(H;-1).
\]
The conclusion follows.
\end{proof}

\begin{lem}\label{lem_pure_type_2}
Let \( G \) be a simple graph and let \( u \) be a branching point of \( G \). Assume that \( G \) has \( t \ge 2 \) pure branches of type \( 2 \). Let \( H \) be the graph obtained by removing all but one such branch. Assume that among the removed branches, there are \( s \) branches whose length has residue \( 2 \pmod{6} \). Then
\[
I(G;-1) = (-1)^s I(H;-1).
\]
\end{lem}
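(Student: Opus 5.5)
The approach mirrors the proof of Lemma~\ref{lem_pure_type_1}, but expands at \( u \) using Lemma~\ref{techn}(2) with the roles of \( G-u \) and \( G-N_G[u] \) interchanged. For type \( 2 \) branches it is the closed-neighborhood term that vanishes.

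\emph{Setup.} For \( j=1,\ldots,t \), write the \( j \)-th pure branch of type \( 2 \) at \( u \) as \( u, v_{j,1},\ldots,v_{j,3k_j+2} \). Its length is \( 3k_j+2 \), and its vertices other than \( u \) induce a path \( P_{3k_j+2} \). Keep the first branch, so that
\[
H = G \setminus \{ v_{j,\ell} \mid j=2,\ldots,t,\ \ell=1,\ldots,3k_j+2\}.
\]
Note that \( u \) is still a vertex of \( H \), and that the first branch is still a pure branch of \( H \) at \( u \).

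\emph{Closed-neighborhood terms.} In \( G - N_G[u] \), the vertices \( v_{1,2},\ldots,v_{1,3k_1+2} \) form a connected component isomorphic to \( P_{3k_1+1} \). The same holds in \( H - N_H[u] \). By Lemma~\ref{techn}(1) and Lemma~\ref{paths}, we get \( I(G-N_G[u];-1)=I(H-N_H[u];-1)=0 \). Hence Lemma~\ref{techn}(2) gives
\[
I(G;-1)=I(G-u;-1) \quad\text{and}\quad I(H;-1)=I(H-u;-1).
\]

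\emph{Deletion terms.} The graph \( G-u \) is the disjoint union of \( H-u \) and the \( t-1 \) paths \( P_{3k_j+2} \) for \( j\ge 2 \). By Lemma~\ref{paths}, \( I(P_{3k_j+2};-1) \) equals \( -1 \) when \( 3k_j+2\equiv 2 \pmod 6 \), and equals \( 1 \) when \( 3k_j+2\equiv 5 \pmod 6 \). Lemma~\ref{techn}(1) then gives
\[
I(G-u;-1)=(-1)^s I(H-u;-1),
\]
where \( s \) is the number of removed branches of length \( \equiv 2 \pmod 6 \). Combining this with the previous step yields \( I(G;-1)=(-1)^s I(H;-1) \).

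\emph{Main point to check.} The only real care needed is the bookkeeping. A branch's length equals the number of its vertices other than \( u \), so the deleted path components have exactly \( 3k_j+2 \) vertices. The kept branch is what makes the closed-neighborhood terms vanish in both \( G \) and \( H \). No parity subtlety arises beyond this, since the \( -1 \) coefficient multiplies a zero term.
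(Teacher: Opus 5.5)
Your proof is correct and follows the paper's argument essentially verbatim. The kept branch makes both closed-neighborhood terms vanish via the component \( P_{3k_1+1} \), and \( G-u \) splits as \( H-u \) together with the \( t-1 \) removed paths \( P_{3k_j+2} \), each contributing \( -1 \) exactly when \( 3k_j+2\equiv 2 \pmod 6 \). (The paper calls these paths of length \( 3k_j+1 \), counting edges, which matches your count of \( 3k_j+2 \) vertices.)
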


\begin{proof}
For \( j = 1, \ldots, t \), let the \( j \)-th pure branch from \( u \) of type \( 2 \) be
\( u, v_{j,1}, \ldots, v_{j,3k_j+2} \).
Let
\[
H = G \setminus \{ v_{j,\ell} \mid j = 2, \ldots, t,\ \ell = 1, \ldots, 3k_j+2 \}
\]
be the graph obtained by removing \( t-1 \) pure branches of type \( 2 \) from \( G \).

Both \( G - N_G[u] \) and \( H - N_H[u] \) contain a connected component isomorphic to the path \( P_{3k_1+1} \); hence
\[
I(G-N_G[u];-1) = I(H-N_H[u];-1) = 0.
\]

Now, \( G - u \) is the disjoint union of \( H - u \) and \( t-1 \) paths of lengths \( 3k_j+1 \) for \( j \ge 2 \). By Lemma~\ref{techn} and Lemma~\ref{paths}, we obtain
\[
I(G-u;-1) = I(H-u;-1)\,(-1)^s,
\]
where \( s \) is the number of removed branches whose length is congruent to \( 2 \pmod{6} \). Applying Lemma~\ref{techn}, we deduce that
\[
I(G;-1)
= I(G-u;-1)
= (-1)^s I(H-u;-1)
= (-1)^s I(H;-1).
\]
The conclusion follows.
\end{proof}

\begin{lem}\label{lem_pure_type_3}
Let \( G \) be a simple graph and let \( u \) be a branching point of \( G \). Let \( H \) be the graph obtained by removing all pure branches of type \( 0 \). Assume that among these branches, there are \( s \) whose length has residue \( 3 \pmod{6} \). Then
\[
I(G;-1) = (-1)^s I(H;-1).
\]
\end{lem}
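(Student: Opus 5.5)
We expand the recursion of Lemma~\ref{techn}(2) at the branching point \(u\), in the same way as in Lemmas~\ref{lem_pure_type_1} and~\ref{lem_pure_type_2}. Let the pure branches of type \(0\) at \(u\) be \(u, v_{j,1}, \ldots, v_{j,3k_j}\) for \(j = 1, \ldots, t\), with \(k_j \ge 1\). Then \(H\) is obtained from \(G\) by deleting all vertices \(v_{j,\ell}\), so \(u\) stays in \(H\).

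For \(G - u\), the graph is the disjoint union of \(H - u\) and the paths \(P_{3k_j}\). By Lemma~\ref{techn}(1) and Lemma~\ref{paths}, \(I(P_{3k_j};-1)\) equals \(1\) if \(3k_j \equiv 0 \pmod 6\) and \(-1\) if \(3k_j \equiv 3 \pmod 6\). Hence
\[
I(G-u;-1) = (-1)^s\, I(H-u;-1).
\]

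For \(G - N_G[u]\), removing \(N_G[u]\) deletes the vertices \(v_{j,1}\). Thus \(G - N_G[u]\) is the disjoint union of \(H - N_H[u]\) and the paths \(P_{3k_j-1}\). A path \(P_{3k-1}\) has \(3k-1 \equiv 2\) or \(5 \pmod 6\) according as \(3k \equiv 3\) or \(0 \pmod 6\). By Lemma~\ref{paths} its value at \(-1\) is \(-1\) or \(1\) respectively, so the signs agree with those of \(P_{3k_j}\). Therefore
\[
I(G-N_G[u];-1) = (-1)^s\, I(H-N_H[u];-1).
\]
Here we use that \(N_H[u] = N_G[u] \setminus \{v_{j,1}\}\), which makes the two remaining parts equal as graphs.

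Combining the two identities through Lemma~\ref{techn}(2) at \(x=-1\), applied to both \(G\) and \(H\), gives
\[
I(G;-1) = (-1)^s\bigl(I(H-u;-1) - I(H-N_H[u];-1)\bigr) = (-1)^s I(H;-1).
\]

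The only delicate point is the bookkeeping in the second step. One must check that the residue of \(3k_j-1\) modulo \(6\) gives the same sign as \(3k_j\), so that a single factor \((-1)^s\) pulls out of both terms. Everything else is routine; in particular, the case where \(u\) becomes a leaf or a non-branching vertex of \(H\) needs no special treatment.
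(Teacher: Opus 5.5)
Your proof is correct and follows essentially the paper's argument. Both expand Lemma~\ref{techn}(2) at \(u\), split off the branch paths \(P_{3k_j}\) and \(P_{3k_j-1}\) as components of \(G-u\) and \(G-N_G[u]\), and use that these two paths take the same value \(\pm 1\) at \(-1\); the only difference is that the paper handles one branch of length \(6k+r\), \(r\in\{3,6\}\), and iterates, whereas you remove all \(t\) branches at once, which avoids asking whether the remaining branches are still pure branches at a branching point after each step.
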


\begin{proof}
Let \( P \) be a pure branch from \( u \) of length \( 6k + r \), where \( k \ge 0 \) and \( r \in \{3,6\} \). It suffices to prove that
\[
I(G;-1) = (-1)^{r/3} I(H;-1),
\]
where \( H \) is obtained from \( G \) by removing the branch \( P \). Indeed, we have
\[
G - u = (H - u) \cup (P - u)
\quad \text{and} \quad
G - N_G[u] = (H - N_H[u]) \cup (P - N_P[u]).
\]
Note that \( P - u \cong P_{6k+r} \) and \( P - N_P[u] \cong P_{6k+r-1} \). By Lemma~\ref{paths}, we obtain
\[
I(P_{6k+r};-1) = I(P_{6k+r-1};-1) = (-1)^{r/3}.
\]
Applying Lemma~\ref{techn}, we deduce the desired conclusion.
\end{proof}

We now show that any tree can be reduced to \( P_n \) for some \( n \).

\begin{lem}\label{lem_reducible_trees}
Let \( G \) be a tree. Then \( G \) is reducible to \( P_n \) for some \( n \ge 1 \).
\end{lem}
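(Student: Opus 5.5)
We argue by strong induction on the number of vertices of \(G\); "reducible to" is read as reducible by a finite sequence of moves (possibly empty). If \(G\) has no branching point, then \(G\), being a tree, is a path \(P_n\) with \(n\ge 1\), and there is nothing to prove. So assume \(G\) has a branching point. We will show that some move (1), (2) or (3) can be applied at some branching point. Each move deletes at least one vertex and produces a tree again. For (1), the two branches share \(u\), so their union is a path. For (2) and (3), we delete a pure branch minus \(u\), which is a pendant path. The induction hypothesis then applies to the resulting tree, and concatenating the sequences of moves gives the claim.

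The key step is to find a branching point \(u\) with at least two pure branches. Consider the tree \(T'\) obtained from \(G\) by deleting every vertex of degree at most \(2\) that lies on a pure branch, i.e. by stripping all pendant paths. Equivalently, \(T'\) is the subtree spanned by the branching points together with the paths between them. If \(G\) has exactly one branching point \(u\), then all \(\deg u\ge 3\) branches at \(u\) are pure. Otherwise \(T'\) is a tree with at least two vertices that are branching points of \(G\), and we take a leaf \(u\) of the subtree spanned by the branching points. Then \(u\) has exactly one neighbour direction leading toward other branching points. Every other direction from \(u\) contains no branching point, so it is a path ending at a leaf of \(G\), i.e. a pure branch. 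Since \(\deg_G u\ge 3\), \(u\) has at least two pure branches.

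Now take the pure branches at \(u\), of which there are at least two. If two of them have the same type, move (2) applies. If one has type \(0\), move (3) applies. Otherwise the types present are distinct and nonzero, so one branch has type \(1\) and another has type \(2\), and move (1) applies. Each of these moves removes at least one vertex. Move (1) removes the rest of the graph, which is nonempty since \(\deg u\ge 3\). Moves (2) and (3) remove a branch of length at least \(1\); for type \(0\) the length is \(\ge 3\).

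The main obstacle is the existence step for a branching point carrying two pure branches. It needs care about what "pure" means: the intermediate vertices must have degree \(\le 2\), so a direction from \(u\) that contains no further branching point is automatically a pure branch. Once that is settled, the rest is a routine case check and induction.
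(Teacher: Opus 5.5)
Your proof is correct. Its outer structure matches the paper's: induction on the number of vertices, reduced to finding a branching point with at least two pure branches, after which one of the three moves must apply.

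The difference is in how you prove that key existence step.
\begin{itemize}
\item The paper uses an inner induction. It deletes the pure branch from a leaf $v$ to its nearest branching point $w$ and applies the claim to the smaller tree $H$. It then splits into cases: the branching point $w_1$ found in $H$ equals $w$; or its pure branches avoid $w$; or one of them passes through $w$, which forces $\deg_G w = 3$ and gives a second pure branch at $w$.
\item You make an extremal choice instead. A leaf $u$ of the minimal subtree spanned by the branching points has only one direction containing other branching points, so its remaining $\deg_G u - 1 \ge 2$ directions are pure branches.
\end{itemize}
Your route is shorter and avoids the case analysis. The paper's route needs no auxiliary subtree and stays inside the single induction.

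Two facts you use tacitly deserve a line each:
\begin{itemize}
\item A leaf of the minimal spanning subtree is itself a branching point; otherwise it could be deleted, contradicting minimality.
\item A component of $G-u$ containing no branching point is a path attached to $u$ at one of its endpoints, so together with $u$ it is a pure branch.
\end{itemize}
Your description of $T'$ by stripping pendant paths is unnecessary, since only the spanning-subtree description is used.

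You also make explicit two things the paper leaves implicit. First, two pure branches at $u$ always permit move (1), (2) or (3). Second, each move deletes at least one vertex and leaves a tree.
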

\begin{proof}
We proceed by induction on \( n \), the number of vertices of \( G \). The base case \( n \le 2 \) is trivial.

Note that if \( G \) has no branching points, then it is isomorphic to \( P_n \). Hence, it suffices to show that if \( G \) has a branching point, then it contains a branching point with at least two pure branches. Let \( v \) be a leaf of \( G \). Consider a path from \( v \) to another leaf of \( G \), and let \( w \) be the branching point closest to \( v \), so that the branch from \( w \) to \( v \) is pure. Let \( H \) be the graph obtained by removing this branch. If \( H \) is a path, then \( w \) is a branching point with at least two branches. Otherwise, by induction, \( H \) has a branching point with at least two pure branches.

If this branching point is \( w \) itself, then we are done. Otherwise, let this branching point of \( H \) be \( w_1 \). If the pure branches of \( H \) at \( w_1 \) do not contain \( w \), then they are also pure branches of \( G \). Hence, we are done if \( w_1 \) has two pure branches not containing \( w \). If \( w_1 \) has a pure branch containing \( w \), note that \( w \) cannot be a leaf of \( G \), so this pure branch must extend beyond \( w \). This extended path from \( w \) is a pure branch of \( G \) at \( w \). Hence, \( w \) has at least two pure branches. The conclusion follows.
\end{proof}

\begin{proof}[Proof of Theorem \ref{thm_contractible}]
The conclusion follows from Lemmas \ref{lem_reducible_trees}, \ref{lem_mix_1_2}, \ref{lem_pure_type_1}, \ref{lem_pure_type_2}, and \ref{lem_pure_type_3}.
\end{proof}

\begin{proof}[Proof of Theorem \ref{thm_pm1}]
By Lemma \ref{lem_mix_1_2}, we have \( I(G;-1) = \pm 1 \) if and only if no mixed branches of types \(1\) and \(2\) appear simultaneously at any branching point during the reduction process. The conclusion then follows from Lemmas \ref{lem_reducible_trees}, \ref{lem_pure_type_1}, \ref{lem_pure_type_2}, and \ref{lem_pure_type_3}.
\end{proof}

\begin{exm} Consider the following graph and its branch removal transformations. 

\begin{center}
    
\begin{tikzpicture}[
    scale=0.5,
    every node/.style={circle, fill, inner sep=1.5pt}
]

\begin{scope}[shift={(0,0)}]

\node (1) at (0,0) {};
\node (8) at (2,0) {};
\draw (1) -- (8);

\node (2) at (-1,1) {};
\node (3) at (-2,2) {};
\node (4) at (-2,0.5) {};
\draw (1) -- (2);
\draw (2) -- (3);
\draw (2) -- (4);

\node (5) at (-1,-1) {};
\node (6) at (-2,-0.5) {};
\node (7) at (-2,-2) {};
\draw (1) -- (5);
\draw (5) -- (6);
\draw (5) -- (7);

\node (9) at (3,1) {};
\node (10) at (4,2) {};
\node (11) at (4,0.5) {};
\draw (8) -- (9);
\draw (9) -- (10);
\draw (9) -- (11);

\node (12) at (3,-1) {};
\node (13) at (4,-2) {};
\draw (8) -- (12);
\draw (12) -- (13);

\end{scope}

\draw[->, thick] (5,0) -- (7,0);

\begin{scope}[shift={(10,0)}]

\node (1b) at (0,0) {};
\node (8b) at (2,0) {};
\draw (1b) -- (8b);

\node (2b) at (-1,1) {};
\node (3b) at (-2,2) {};
\draw (1b) -- (2b);
\draw (2b) -- (3b);

\node (5b) at (-1,-1) {};
\node (7b) at (-2,-2) {};
\draw (1b) -- (5b);
\draw (5b) -- (7b);

\node (9b) at (3,1) {};
\node (10b) at (4,2) {};
\draw (8b) -- (9b);
\draw (9b) -- (10b);

\node (12b) at (3,-1) {};
\node (13b) at (4,-2) {};
\draw (8b) -- (12b);
\draw (12b) -- (13b);

\end{scope}

\draw[->, thick] (15,0) -- (17,0);

\begin{scope}[shift={(21,0)}]

\node (A) at (-3,0) {};
\node (B) at (-2,0) {};
\node (C) at (-1,0) {};
\node (D) at (0,0) {};
\node (E) at (1,0) {};
\node (F) at (2,0) {};

\draw (A) -- (B) -- (C) -- (D) -- (E) -- (F);

\end{scope}

\end{tikzpicture}

\end{center}
The original graph is transformed into the second graph by collapsing two branches of length \(1\) at three branching points; these are even moves. The final graph is obtained by collapsing two branches of length \(2\) at two branching points; these are odd moves. Hence, \( G \) is reducible to \( P_6 \) by an even number of odd moves. Therefore, we deduce that \( I(G; -1) = 1 \).
\end{exm}

\begin{rem}
The order in which pure branches are removed is not important; the moves may be performed in any order. For each removal of pure branches of type \(1\) or \(2\), we may retain the branch of smallest length. Note that, in the previous three lemmas on branch removal, the graph on which the removal is performed need not be a tree, and the results still hold. This will be useful for understanding the value of the independence polynomial of a graph at $-1$ beyond trees.
\end{rem}

\subsection*{Acknowledgments}
We are grateful to Professor Alexander Engstr\"om and Thiago Holleben for pointing out relevant work on the homotopy of the independence complexes of graphs.


\begin{thebibliography}{2}


\bibitem {A} M. Adamaszek,  \emph{A note on independence complexes of chordal graphs and dismantling}, The electronic journal of combinatorics {\bf 24(2)} (2017), P2.34.


\bibitem {AMSE} Y. Alavi, P. J. Malde, A. J. Schwenk, P. Erd\"{o}s,  \emph{The vertex independence sequence of a graph is not constrained}, Congressus Numerantium, \textbf{58} (1987), 15--23.


\bibitem {CS} M. Chudnovsky, P. Seymour,  \emph{The roots of the independence polynomial of a claw-free graph},  Journal of Combinatorial Theory, Series B, \textbf{97} (2007), 350--357.




\bibitem {EH} R. Ehrenborg, G. Hetyei,  \emph{The topology of the independence complex}, European Journal of Combinatorics, \textbf{27} (2006), 906--923.



\bibitem {E} A. Engstr\"om,  \emph{Complexes of directed trees and independence complexes}, Discrete Mathematics \textbf{309} (2009), 3299--3309.


\bibitem {FH} S. Faridi and T. Holleben, \emph{Spherical complexes}, arXiv:2311.07727

\bibitem {G} I. Gutman, \emph{An identity for the independence polynomials of trees}, Publications de L'Intitut Mathematique, Nouvelle serie  {\bf 50(64)} (1991), 19--23.


\bibitem {GH} I. Gutman, F. Harary, \emph{Generalizations of the matching polynomial}, Utilitas Mathematica \textbf{24} (1983) 97--106.


\bibitem{HLMP} D. T. Hoang, V. E. Levit, E. Mandrescu, M. H. Pham, \emph{Log-concavity of the indepence polynomial of $W_p$ graphs}, Discrete Mathematics {\bf 349} (2026), 115109.

\bibitem{HL} C. Hoede and X. Li, \textit{Clique polynomials and independent set polynomials of graphs}. Discrete Mathematics {\bf 125} (1994), 219--228.




\bibitem{HKV} Hibi, T., Kara, S., and Vien, D. (2026). Independence polynomials of graphs. arXiv:2603.16695.


\bibitem {Ki} J. Kim, \emph{The homotopy type of the independence complex of graphs with no induced cycles of length divisible by 3}, European Journal of Combinatorics {\bf 104} (2022), 103534.


\bibitem {Ko} D. Kozlov, \emph{Complexes of Directed Trees}, J. Com. Theory, Series A, \textbf{88} (1999), 112--122.


\bibitem{LM1}  V. E. Levit, E. Mandrescu, \emph{The independence polynomial of a graph-a survey}.   Proceedings of the 1st International Conference on Algebraic Informatics, Aristotle Univ. Thessaloniki Thessaloniki. (2005), 233--254. 




 


\bibitem{LM2} V. E. Levit and E. Mandrescu. The independence polynomial of a graph at $-1$. arXiv:0904.4819.

 

 
\bibitem{LM3}V. E. Levit, E. Mandrescu, \emph{On the  independence polynomial of the corona of graphs}, Discrete Applied Mathematics {\bf 203} (2016), 85--93.





  


\end{thebibliography}
\end{document}